\documentclass[11pt]{article}
\usepackage{amssymb,amsmath}
\usepackage[mathscr]{eucal}
\usepackage[cm]{fullpage}
\usepackage[english]{babel}
\usepackage[latin1]{inputenc}
\usepackage{color}%para depois tirar
%\setlength{\evensidemargin}{36pt}
%\setlength{\oddsidemargin}{-.6mm} %% this is 2.56cm (default) more 0.94cm
                                  %% which implies the same in the other margin
                                  %%% A4 = 210 x 297 mm
%\textwidth 16cm %%14
%\textheight 23cm %21.6cm
%\usepackage[notref,notcite]{showkeys} 
%\usepackage{backref} 

%%%%%%%%%%%% WATERMARK %%%%%%%%%%%%% TO BE REMOVED %%%%%%%%%%%%%%%%%
\usepackage[usenames,dvipsnames]{xcolor} 
\usepackage{graphicx} 
\usepackage{draftcopy} 
\draftcopyName{WATERMARK}{100} 
\usepackage{watermark} 
% To be able to use \scalebox, \rotatebox and so on 
%\usepackage{graphicx} 
\definecolor{verylightblue}{rgb}{.855,.89,1.0}
%\definecolor{mylightblue}{rgb}{.03125,.64453125,.79296875}
\definecolor{lightbluegray}{rgb}{.788,.847,0.887}
\def\watermarktext{\put(-10,-680){%\scalebox{4}{\color{lightbluegray}WATERMARK}
\rotatebox{%54.75
90}{\Huge\bf\color{lightbluegray}%{cyan}%{blue}
{\scriptsize arXiv (v1.submitted.24092014)}
}}}
\watermark{% 
\watermarktext %comentar 
} 
%%%%%%%%%%%%%%%%%%%%%%%%

%%%%%%%%%%% Macros %%%%%%%%%%%

\def\dom{\mathop{\mathrm{Dom}}\nolimits}

\def\N{\mathbb N}

\def\POI{\mathcal{POI}}
\def\DP{\mathcal{DP}}
\def\ODP{\mathcal{ODP}}
\def\PODI{\mathcal{PODI}}

\def\PO{\mathcal{PO}}

\def\I{\mathcal{I}}
%\def\F{\mathcal{F}}
%\def\FOP{\mathcal{FOP}}
%\def\X{\mathscr{X}}
%\def\Zn{\overline{\mathbb Z}}
%\def\Nn{\overline{\mathbb N}}
%\def\fix{\mathop{\mathrm{Fix}}\nolimits}
%\def\varwidetilde#1{\widetilde{#1\,}}
%%%%%%%%%%%%%%
%\newcommand{\pres}[1]{\langle #1 \rangle}
%\font\petite=cmmi10 at 8pt %%
%\newcommand{\Ref}[1]{(\ref{#1})}
%%%%%%%%%%%
%\def\pv#1{\ensuremath{{\sf#1}}}

%%%%%%%%%%%

\newtheorem{theorem}{Theorem}[section]

\newtheorem{lemma}[theorem]{Lemma}

\newcommand{\NR}{{N\!\!R}}

\newenvironment{proof}{\begin{trivlist}\item[\hskip%
\labelsep{\bf Proof.}]}%
{\qed\rm\end{trivlist}}

\newcommand{\qed}{{\unskip\nobreak
\hfil\penalty50\hskip .001pt \hbox{}
          \nobreak\hfil
         \vrule height 1.2ex width 1.1ex depth -.1ex
           \parfillskip=0pt\finalhyphendemerits=0\medbreak}}

%%%%%%%%%%%%%%%%%%%%%%%%%%%%%%%%%%%%%%%%%%%%%%%%%%%%%%%%%%%%%%%%%
%\font\eightrm=cmr8 scaled\magstep1

\newcommand{\lastpage}{\addresss}

\newcommand{\addresss}{\small \sf  
\noindent{\sc V\'\i tor H. Fernandes}, 
Departamento de Matem\'atica, 
Faculdade de Ci\^encias e Tecnologia, 
Universidade Nova de Lisboa, 
Monte da Caparica, 
2829-516 Caparica, 
Portugal; 
also: 
Centro de \'Algebra da Universidade de Lisboa, 
Av. Prof. Gama Pinto 2, 
1649-003 Lisboa, 
Portugal; 
e-mail: vhf@fct.unl.pt

\medskip

\noindent{\sc Teresa M. Quinteiro}, 
Instituto Superior de Engenharia de Lisboa, 
Rua Conselheiro Em\'\i dio Navarro 1, 
1950-062 Lisboa, 
Portugal; 
also: 
Centro de \'Algebra da Universidade de Lisboa, 
Av. Prof. Gama Pinto 2, 
1649-003 Lisboa, 
Portugal;
e-mail: tmelo@adm.isel.pt 
}

\title{Presentations for monoids of finite partial isometries}

\author{V\'\i tor H. Fernandes\footnote{This work was developed within the FCT Project PEst-OE/MAT/UI0143/2014 of CAUL, FCUL, and of Departamento de Matem\'atica da Faculdade de Ci\^encias e Tecnologia da Universidade Nova de Lisboa.}~ and 
Teresa M. Quinteiro\footnote{This work was developed within the FCT Project 
PEst-OE/MAT/UI0143/2014 of CAUL, FCUL, and of Instituto Superior de Engenharia de Lisboa.}
}

%%%%%%%%%%%%%%%%%%%%%%%%%%%%%%%%%%% 

\begin{document}

\maketitle

\begin{abstract}
In this paper we give presentations for the monoid $\DP_n$ of all partial isometries on $\{1,\ldots,n\}$ and for its submonoid $\ODP_n$ of all order-preserving partial isometries. 
\end{abstract}

\medskip

\noindent{\small 2010 \it Mathematics subject classification: \rm 20M20, 20M05.} 

\noindent{\small\it Keywords: \rm presentations, transformations, order-preserving, partial isometries.} 

\section*{Introduction}

Semigroups of order-preserving transformations have long been
considered in the literature. A short, and by no means comprehensive, history follows. 
In  1962, A\v\i zen\v stat
\cite{Aizenstat:1962} and Popova \cite{Popova:1962} exhibited presentations for
${\mathcal{O}} _n$, the monoid of all order-preserving full transformations
on a chain with $n$ elements, and for $\PO_n$, the  monoid of all
order-preserving partial transformations on a chain with $n$
elements. Some years later, in 1971, Howie \cite{Howie:1971}
studied some combinatorial and algebraic properties of ${\mathcal{O}} _n$
and, in 1992, Gomes and Howie \cite{Gomes&Howie:1992}
revisited the monoids ${\mathcal{O}} _n$ and $\PO_n$. Certain classes of divisors of the monoid ${\mathcal{O}} _n$ 
were determined by Higgins \cite{Higgins:1995} in 1995 and by Fernandes \cite{Fernandes:1997} in 1997.  
More recently, Laradji and Umar \cite{Laradji&Umar:2004,Laradji&Umar:2006} 
presented more combinatorial properties of these two monoids.  
The injective counterpart of ${\mathcal{O}} _n$, i.e. the monoid $\POI_n$ of all
injective members of $\PO_n$, has been object of study by the
first author in several papers
\cite{Fernandes:1997,Fernandes:1998,Fernandes:2001,Fernandes:2002,Fernandes:2008}, by Derech in \cite{Derech:1991}, 
by Cowan and Reilly in \cite{Cowan&Reilly:1995}, by Ganyushkin and Mazorchuk in  \cite{Ganyushkin&Mazorchuk:2003}, among other authors. 
Presentations for the monoid $\POI_n$ and for its extension $\PODI_n$,  
the monoid of all injective order-preserving or order-reversing
partial transformations on a chain with $n$ elements, 
were given by Fernandes \cite{Fernandes:2001} in 2001 and 
by Fernandes et al.~\cite{Fernandes&Gomes&Jesus:2004} in 2004, respectively.  
See also \cite{Fernandes:2002survey}, for a survey on known presentations of transformations monoids. 
We notice that the first author together with Delgado
\cite{Delgado&Fernandes:2000,Delgado&Fernandes:2004} have computed the abelian kernels of
the monoids $\POI_n$ and $\PODI_n$, by using a method that is
strongly dependent of given presentations of the monoids. 

\smallskip 

The study of semigroups of finite partial isometries was initiated by Al-Kharousi et al.~in 
\cite{AlKharousi&Kehinde&Umar:2014,AlKharousi&Kehinde&Umar:2014s}. 
The first of these two papers is dedicated to investigate some combinatorial properties of 
the monoid $\DP_n$ of all partial isometries on $\{1,\ldots,n\}$ and of its submonoid $\ODP_n$ of all order-preserving (considering the usual order of $\N$) partial isometries, in particular, their cardinalities. The second one presents the study of some of their algebraic properties, 
namely Green's structure and ranks. Recall that the \textit{rank} of a 
monoid $M$ is the minimum of the set $\{|X|\mid \mbox{$X\subseteq M$ and $X$ generates $M$}\}$. 
See e.g. \cite{Howie:1995} for basic notions on Semigroup Theory. 

\smallskip

The main aim of this paper is to exhibit presentations for the monoids $\DP_n$ and $\ODP_n$. 
We would like to point out that we made considerable use of computational tools, namely, of GAP \cite{GAP4}.

\medskip

Next, we introduce precise definitions of the objects considered in this work.  

Let $n\in\N$ and $X_n=\{1,\ldots,n\}\subset\N$ (with the usual arithmetic and order). 
Let $\I_n$ be the symmetric inverse semigroup on
$X_n$, i.e. the monoid, under composition of maps, of all partial permutations of $X_n$.

Let $\alpha\in\I_n$. We say that $\alpha$ is \textit{order-preserving}
(respectively, \textit{order-reversing}) if,
for all $i,j \in \dom(\alpha)$, $i\leq j$ implies $i\alpha\leq j\alpha$
(respectively, $i\alpha\geq j\alpha$).
Clearly, the product of two order-preserving transformations
or two order-reversing transformations is an
order-preserving transformation and the product of an order-preserving
transformation by an order-reversing transformation, or vice-versa, is
an order-reversing transformation. On the other hand, we say that $\alpha$ is an  
\textit{isometry} if, for all $i,j \in \dom(\alpha)$, $|i\alpha-j\alpha|=|i-j|$. 

Define
$$
\PODI_n=\{\alpha\in\I_n\mid\mbox{$\alpha$ is order-preserving or order-reversing}\}, 
$$
$$
\POI_n=\{\alpha\in\I_n\mid\mbox{$\alpha$ is order-preserving}\}, 
$$
$$
\DP_n=\{\alpha\in\I_n\mid\mbox{$\alpha$ is an isometry}\} 
$$
and 
$$
\ODP_n=\{\alpha\in\I_n\mid\mbox{$\alpha$ is an order-preserving isometry}\}.  
$$
All these sets are inverse submonoids of $\I_n$ (see \cite{Fernandes&Gomes&Jesus:2004,AlKharousi&Kehinde&Umar:2014s}). 
Obviously, $\POI_n\subseteq\PODI_n$ and $\ODP_n=\DP_n\cap\POI_n$.  
Moreover, as observed by  Al-Kharousi et al.~\cite{AlKharousi&Kehinde&Umar:2014s}, 
we also have $\DP_n\subseteq\PODI_n$. 

\smallskip 

For simplicity, from now on we consider $n\ge3$.

\section{Preliminaries}\label{presection}

Let $X$ be a set and denote by $X^*$ the free monoid generated by
$X$. A \textit{monoid presentation} is an ordered pair $\langle X\mid
R\rangle$, where $X$ is an alphabet and $R$ is a subset of
$X^*\times X^*$. An element $(u,v)$ of $X^*\times X^*$ is called a
{\it relation} and it is usually represented by $u=v$. To avoid
confusion, given $u, v\in X^*$, we will write $u\equiv v$, instead
of  $u=v$, whenever we want to state precisely that $u$ and $v$
are identical words of $X^*$. A monoid $M$ is said to be {\it
defined by a presentation} $\langle X\mid R\rangle$ if $M$ is
isomorphic to $X^*/\rho_R$, where $\rho_R$ denotes the smallest
congruence on $X^*$ containing $R$. For more details see
\cite{Lallement:1979} or \cite{Ruskuc:1995}.

Given a finite monoid $T$, it is clear that we can always exhibit
a presentation for it, at worse by enumerating all its elements,
but clearly this is of no interest, in general. So, by finding a
presentation for a finite monoid, we mean to find in some sense a
\textit{nice} presentation (e.g. with a small number of generators and
relations).

A usual method to find a presentation for a finite monoid
is described by the following result, adapted for the monoid case from
\cite[Proposition 3.2.2]{Ruskuc:1995}. %pag.44

\begin{theorem}[Guess and Prove method] \label{ruskuc} 
Let $M$ be a finite monoid, let $X$ be a generating set for $M$,
let $R\subseteq X^*\times X^*$ be a set of relations, and let
$W\subseteq X^*$. Assume that the following conditions are
satisfied:
\begin{enumerate}
\item The generating set $X$ of $M$ satisfies all the relations from $R$;
\item For each word $w\in X^*$, there exists a word $w'\in W$ such
that the relation $w=w'$ is a consequence of $R$;
\item $|W|\le|M|$.
\end{enumerate}
Then, $M$ is defined by the presentation $\langle X\mid R\rangle$.
\end{theorem}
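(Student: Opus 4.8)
The plan is to run the standard cardinality-squeeze argument for monoid presentations. First I would fix the evaluation homomorphism $\varphi\colon X^*\to M$ that sends each word to the element of $M$ it represents; since $X$ generates $M$, this $\varphi$ is surjective. Condition~(1) says precisely that $u\varphi=v\varphi$ for every $(u,v)\in R$, so $R\subseteq\ker\varphi$, where $\ker\varphi=\{(u,v)\in X^*\times X^*\mid u\varphi=v\varphi\}$ is a congruence on $X^*$. As $\rho_R$ is the smallest congruence containing $R$, it follows that $\rho_R\subseteq\ker\varphi$. Hence $\varphi$ factors through the quotient, yielding an induced surjective homomorphism $\bar\varphi\colon X^*/\rho_R\to M$. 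It then remains only to show that $\bar\varphi$ is injective, equivalently that $|X^*/\rho_R|=|M|$.

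Next I would use condition~(2) to bound $|X^*/\rho_R|$ from above. The assertion that the relation $w=w'$ is a consequence of $R$ is exactly the statement that $(w,w')\in\rho_R$, i.e. that $w$ and $w'$ lie in the same $\rho_R$-class. Thus condition~(2) guarantees that every $\rho_R$-class contains at least one representative from $W$, so the composite of the inclusion $W\hookrightarrow X^*$ with the canonical projection $X^*\to X^*/\rho_R$ is surjective. This gives $|X^*/\rho_R|\le|W|$, and combining with condition~(3) we obtain $|X^*/\rho_R|\le|W|\le|M|$.

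Finally I would close the loop. Since $\bar\varphi$ is surjective we also have $|X^*/\rho_R|\ge|M|$, so the two inequalities force $|X^*/\rho_R|=|M|$. As $M$ is finite, $\bar\varphi$ is a surjection between finite sets of equal cardinality, hence a bijection, and therefore an isomorphism; this is exactly the statement that $M$ is defined by $\langle X\mid R\rangle$. The only genuinely delicate point is that, a priori, $X^*/\rho_R$ might be infinite, so one cannot deduce injectivity of $\bar\varphi$ directly from its surjectivity — it is precisely the interplay of conditions~(2) and~(3) that bounds the quotient above by $|M|$ and makes the squeeze work. Everything else is routine verification that $\varphi$ respects $R$ and that being a consequence of $R$ coincides with $\rho_R$-equivalence.
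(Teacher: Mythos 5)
Your proof is correct and is exactly the standard cardinality-squeeze argument: the paper itself gives no proof of this result, citing it as an adaptation of Proposition 3.2.2 of Ru\v{s}kuc's thesis, and the argument there is the same one you give (induced surjection $X^*/\rho_R\to M$ from condition~1, the bound $|X^*/\rho_R|\le|W|\le|M|$ from conditions~2 and~3, and the finiteness of $M$ to conclude bijectivity). Nothing is missing; your remark that conditions~2 and~3 are what rule out an infinite quotient is precisely the point of the hypothesis $|W|\le|M|$.
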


Notice that, if $W$ satisfies the above conditions then, in fact,
$|W|=|M|$.

\smallskip

Let $X$ be an alphabet, $R\subseteq X^*\times X^*$ a set of
relations and $W$ a subset of $X^*$. We say that $W$ is a set of
\textit{forms} for the presentation $\langle X\mid R\rangle$ if
the condition 2 of Theorem \ref{ruskuc} is satisfied. Suppose
that the empty word belongs to $W$ and, for each letter $x\in X$ and
for each word $w\in W$, there exists a word $w'\in W$ such that
the relation $wx=w'$ is a consequence of $R$. Then, it is easy to
show that $W$ is a set of forms for $\langle X\mid R\rangle$.

\medskip

Given a presentation for a monoid, another method to find a new
presentation consists in applying Tietze transformations. For a
monoid presentation $\langle A\mid R\rangle$, the 
four \emph{elementary Tietze transformations} are:

\begin{description}
\item(T1)
Adding a new relation $u=v$ to $\langle A\mid R\rangle$,
providing that $u=v$ is a consequence of $R$;
\item(T2)
Deleting a relation $u=v$ from $\langle A\mid R\rangle$,
providing that $u=v$ is a consequence of $R\backslash\{u=v\}$;
\item(T3)
Adding a new generating symbol $b$ and a new relation $b=w$, where
$w\in A^*$;
\item(T4)
If $\langle A\mid R\rangle$ possesses a relation of the form
$b=w$, where $b\in A$, and $w\in(A\backslash\{b\})^*$, then
deleting $b$ from the list of generating symbols, deleting the
relation $b=w$, and replacing all remaining appearances of $b$ by
$w$.
\end{description}

The next result is well-known (e.g. see \cite{Ruskuc:1995}): 

\begin{theorem} 
Two finite presentations define the same monoid if and only if one
can be obtained from the other by a finite number of elementary
Tietze transformations $(T1)$, $(T2)$, $(T3)$ and $(T4)$.  
\end{theorem}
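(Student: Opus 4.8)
The plan is to prove the two implications separately. The direction asserting that a finite sequence of Tietze transformations preserves the defined monoid is routine, so I would dispatch it first by checking that each single elementary transformation, applied to a presentation $\langle A\mid R\rangle$, yields a presentation of an isomorphic monoid; the general statement then follows by composing finitely many such isomorphisms. For (T1) and (T2) I would argue directly from the definition of $\rho_R$ as the smallest congruence on $A^*$ containing $R$: if $u=v$ is a consequence of $R$, i.e. $(u,v)\in\rho_R$, then $\rho_{R\cup\{u=v\}}=\rho_R$, so the quotient is literally unchanged, and (T2) is the same observation read backwards. For (T3) I would exhibit the isomorphism explicitly: the homomorphism $A^*/\rho_R\to(A\cup\{b\})^*/\rho_{R\cup\{b=w\}}$ induced by the inclusion $A\hookrightarrow A\cup\{b\}$ is surjective (since $b$ equals $w$ in the target), and its inverse is induced by the retraction $b\mapsto w$ fixing $A$, which respects the relations. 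Transformation (T4) is precisely the inverse of (T3) and is handled by the same isomorphism read in the opposite direction.

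For the converse, suppose $\langle A\mid R\rangle$ and $\langle B\mid S\rangle$ both define $M$, via isomorphisms $\varphi\colon A^*/\rho_R\to M$ and $\psi\colon B^*/\rho_S\to M$, and take $A,B$ disjoint without loss of generality. The strategy is to construct a single "bridging" presentation reachable from each of the two given ones. For every $b\in B$ choose a word $w_b\in A^*$ whose image in $M$ equals $\psi(b)$, and symmetrically for every $a\in A$ choose $v_a\in B^*$ with the same image as $\varphi(a)$. Starting from $\langle A\mid R\rangle$, I would apply (T3) once for each $b\in B$ to adjoin $b$ together with the relation $b=w_b$, reaching $\langle A\cup B\mid R\cup\{b=w_b:b\in B\}\rangle$.

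The core of the argument is then to show that in this enlarged presentation every relation of $S$ and every relation $a=v_a$ is already a consequence of the current relation set, so that all of them may be adjoined by (T1). This is exactly where the hypothesis that both presentations define $M$ is consumed: since $s=t$ holds in $M$ for each $(s,t)\in S$ and the words $w_b$ translate $B$ into $A$ faithfully, one checks that $(s,t)$ lies in the congruence generated by $R\cup\{b=w_b:b\in B\}$, and the relations $a=v_a$ are verified the same way. After adjoining $S\cup\{a=v_a:a\in A\}$ one obtains the symmetric presentation $\langle A\cup B\mid R\cup S\cup\{b=w_b\}\cup\{a=v_a\}\rangle$. By the mirror image of the preceding step the relations of $R$ are now consequences of the remainder and may be deleted by (T2); each $a\in A$ may then be eliminated by (T4) using its defining relation $a=v_a$, substituting $v_a$ throughout. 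What survives is a presentation on the generators $B$ whose relations, after the substitutions, reduce to $S$ together with redundant consequences that (T2) removes, namely $\langle B\mid S\rangle$.

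The hard part, and the step I would treat most carefully, is the consequence-checking above: verifying that each $(s,t)\in S$ becomes derivable from $R\cup\{b=w_b:b\in B\}$. The cleanest way to organise this is to track the two "translation" homomorphisms between $A^*$ and $B^*$ induced by $a\mapsto v_a$ and $b\mapsto w_b$, show that their composite returns each generator to itself modulo the current congruence (using the defining relations), and deduce that the translations carry $\rho_S$-classes to $\rho_R$-classes and back. Once this dictionary is in place the adjunctions and deletions are bookkeeping, and the finiteness of $A$, $B$, $R$ and $S$ guarantees that only finitely many elementary transformations are used, which is what the statement requires.
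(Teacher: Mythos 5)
The paper does not actually prove this theorem: it is stated as a well-known result with a pointer to Ru\v{s}kuc's thesis, so there is no in-paper argument to compare yours against. Your proposal is a correct rendition of the standard proof of Tietze's theorem for (monoid) presentations --- the easy direction by checking that each elementary transformation induces an isomorphism of quotients of free monoids, and the converse by building the bridge presentation $\langle A\cup B\mid R\cup S\cup\{b=w_b\}\cup\{a=v_a\}\rangle$ and reaching it from either side via (T3) and (T1), then returning to $\langle B\mid S\rangle$ via (T2) and (T4); the key consequence-checks are exactly where injectivity of the two isomorphisms onto $M$ is used, as you say. One small inaccuracy worth fixing: (T4) is not \emph{precisely} the inverse of (T3), since a (T4) step may eliminate a generator $b$ that still occurs in other relations (indeed your own elimination of $a$ via $a=v_a$ is of this kind, because $a$ occurs in the words $w_b$). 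The retraction $b\mapsto w$ still induces the required isomorphism, but one must additionally observe that each remaining relation and its substituted version are interderivable using $b=w$, so the congruence is unchanged by the substitution; this is a one-line check, but it is not literally ``reading the (T3) isomorphism backwards.''
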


\medskip

As for the rest of this section, we will describe a process to obtain a
presentation for a finite monoid $T$ given a presentation for a certain
submonoid of $T$. This method was developed by Fernandes et al.~in \cite{Fernandes&Gomes&Jesus:2004} 
and applied there to construct a presentation, for instance, for the monoid $\PODI_n$. Here we will apply it to deduce a presentation for $\DP_n$.  

\medskip

Let $T$ be a (finite) monoid, $S$ be a submonoid of $T$ and $y$ an
element of $T$ such that $y^2=1$. Let us suppose that $T$ is
generated by $S$ and $y$. Let $X=\{x_1,\ldots,x_k\}$ ($k\in\N$) be
a generating set of $S$ and $\langle X\mid R\rangle$ a
presentation for $S$. Consider a set of forms $W$ for $\langle
X\mid R\rangle$ and suppose there exist two subsets $W_{\alpha}$ and
$W_{\beta}$ of $W$ and a word $u_0\in X^*$
such that $W=W_{\alpha}\cup W_{\beta}$ and
$u_0$ is a factor of each word in $W_{\alpha}$. Let $Y=X\cup\{y\}$
(notice that $Y$ generates $T$) and suppose that there exist words
$v_0,v_1,\ldots,v_k\in X^*$ such that the following relations over
the alphabet $Y$ are satisfied by the generating set $Y$ of $T$:
\begin{description}
\item $(\NR_1)$ $yx_i=v_iy$, for all $i\in\{1,\ldots,k\}$;
\item $(\NR_2)$ $u_0y=v_0$.
\end{description}

Observe that the relation (over the alphabet $Y$)
\begin{description}
\item $(\NR_0)$  $y^2=1$
\end{description}
is also satisfied (by the generating set $Y$ of $T$), by
hypothesis.

Let
$$
\mbox{$\overline R=R\cup \NR_0\cup \NR_1\cup \NR_2\;\;$ and
$\;\;\overline{W}=W\cup\{wy\mid w\in W_{\beta}\}\subseteq Y^*$.}
$$

Under these conditions, Fernandes et al.~\cite[Theorem 2.4]{Fernandes&Gomes&Jesus:2004}, proved: 

\begin{theorem} \label{overpresentation}
If $W$ contains the empty word then $\overline{W}$
is a set of forms for the presentation 
$\langle Y\mid \overline{R}\rangle$. Moreover, if $|\overline{W}|\leq|T|$ then
the monoid $T$ is defined by the presentation 
$\langle Y\mid\overline{R}\rangle$. 
\end{theorem}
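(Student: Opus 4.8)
The plan is to establish the two assertions in turn, deriving the first from the sufficient criterion for a set of forms stated immediately after Theorem \ref{ruskuc}, and the second directly from Theorem \ref{ruskuc}.

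For the first assertion, since the empty word lies in $W \subseteq \overline W$, it suffices to check that for every word $\overline w \in \overline W$ and every letter $z \in Y$ the product $\overline w z$ is $\overline R$-equivalent to some element of $\overline W$. The words of $\overline W$ are of two kinds, namely $w \in W$ (containing no occurrence of $y$) and $wy$ with $w \in W_\beta$, while $z$ is either some $x_i$ or $y$. Several cases are immediate: if $\overline w = w \in W$ and $z = x_i$, then $wx_i$ reduces to some $w' \in W$ because $W$ is already a set of forms for $\langle X \mid R\rangle$; if $\overline w = wy$ with $w \in W_\beta$ and $z = y$, then $wy^2$ reduces to $w \in W$ by $(\NR_0)$; and if $\overline w = wy$ with $w \in W_\beta$ and $z = x_i$, then $(\NR_1)$ turns $wyx_i$ into $(wv_i)y$, whereupon $wv_i \in X^*$ reduces to some $w'' \in W$ and we are reduced to analysing $w''y$. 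Thus everything comes down to rewriting a word of the form $wy$ with $w \in W$.

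If $w \in W_\beta$ then $wy$ already belongs to $\overline W$, so the essential case is $w \in W_\alpha$. Here I would first prove the auxiliary identity $qy =_{\overline R} y\bar q$ for every $q \in X^*$, where $\bar q$ is obtained from $q$ by replacing each letter $x_i$ by $v_i$; it follows by induction on the length of $q$ from the relation $x_iy =_{\overline R} yv_i$, which is itself an easy consequence of $(\NR_0)$ and $(\NR_1)$. Since $u_0$ is a factor of $w$, write $w \equiv p\,u_0\,q$ with $p,q \in X^*$; moving the trailing $y$ leftwards past $q$ gives $wy =_{\overline R} p\,u_0\,y\,\bar q$, and then the special relation $(\NR_2)$ rewrites $u_0y$ as $v_0$, producing $p\,v_0\,\bar q \in X^*$, which reduces to a form in $W \subseteq \overline W$. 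This step is the heart of the argument and the one I expect to be the main obstacle, since it is exactly where the hypotheses that $u_0$ occurs as a factor of every word of $W_\alpha$ and that $u_0y$ can be pushed back inside $X^*$ are used. With all cases settled, $\overline W$ is a set of forms for $\langle Y \mid \overline R\rangle$.

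For the second assertion I would apply Theorem \ref{ruskuc} with generating set $Y$, relation set $\overline R$, and candidate forms $\overline W$. Its first condition holds because $Y$ generates $T$ and, by hypothesis, satisfies every relation of $R$ (these involve only the letters of $X$) as well as $(\NR_0)$, $(\NR_1)$ and $(\NR_2)$; its second condition is precisely the assertion just proved; and its third condition is the standing hypothesis $|\overline W| \le |T|$. Theorem \ref{ruskuc} then gives that $T$ is defined by the presentation $\langle Y \mid \overline R\rangle$, as required.
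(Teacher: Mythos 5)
Your proof is correct. Note that the paper itself gives no proof of this statement---it is quoted from Theorem 2.4 of \cite{Fernandes&Gomes&Jesus:2004}---so there is no in-paper argument to compare against; your proof (reduce everything to a word $wy$ with $w\in W$, push $y$ leftward using the consequence $x_iy=yv_i$ of $(\NR_0)$ and $(\NR_1)$, eliminate $y$ via $(\NR_2)$ when $w\in W_\alpha$ using the factor $u_0$, and then invoke Theorem \ref{ruskuc} for the second assertion) is essentially the standard argument used in that reference.
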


\section{Some properties of the monoids $\ODP_n$ and $\DP_n$}\label{nownproperties}

The cardinals (among other combinatorial properties) of the monoids $\ODP_n$ and $\DP_n$ were computed by Al-Kharousi et al.~in 
\cite{AlKharousi&Kehinde&Umar:2014}. They showed that 
$$
|\ODP_n| = 3\cdot 2^n-2(n+1) \quad\text{and}\quad |\DP_n| = 3\cdot 2^{n+1}-(n+2)^2-1. 
$$
Next, we present another (short) proof of these equalities that, in our opinion,  gives us more insight. 

Let 
$
\I_n^+=\{\alpha \in \I_n \mid i\le i\alpha\}
$ (extensive partial permutations), 
$
\I_n^-=\{\alpha \in \I_n \mid i\alpha\le i\}
$ (co-extensive partial permutations), 
$
\ODP_n^+=\ODP_n\cap\I_n^+
$ 
and 
$
\ODP_n^-=\ODP_n\cap\I_n^-. 
$
On the other hand, it is easy to check that 
$$
\ODP_n=\{\alpha \in {\cal{I}}_n \mid  i\alpha-i=j\alpha-j, ~\text{for all}~ i,j \in\dom(\alpha)\}. 
$$
Then, clearly, 
$$
\ODP_n=\ODP_n^+\cup\ODP_n^-\quad\text{and}\quad 
E(\I_n)=\ODP_n^+\cap\ODP_n^-~, 
$$
where $E(\I_n)$ denotes the set of all idempotents of $\I_n$, which is formed by all partial identities of $X_n$. 
Furthermore, given a nonempty subset $X$ of $X_n$, in $\ODP_n$, we have exactly $\min(X)$ co-extensive transformations with domain $X$ and $n-\max(X)+1$ extensive transformations with domain $X$. 
For example, the elements of $\ODP_9$ with domain $\{3,5,6\}$ are 
$$
\binom{3~5~6}{1~3~4},~ \binom{3~5~6}{2~4~5},~ \binom{3~5~6}{3~5~6},~ 
\binom{3 ~5~6}{4~6~7},~ \binom{3~5~6}{5~7~8}~\text{and}~\binom{3~5~6}{6~8~9}
$$
(respectively, $3$ co-extensive and $4$ extensive transformations). 
Since the number of (nonempty) subsets of $X_n$ with minimum equal to $k$ is $2^{n-k}$ and the number of (nonempty) subsets of $X_n$ with maximum equal to $k$ is $2^{k-1}$, for $1\le k\le n$, we may deduce that 
$$
|\ODP_n^-|=1+\sum_{k=1}^{n}k2^{n-k}\quad\text{and}\quad 
|\ODP_n^+|=1+\sum_{k=1}^{n}(n-k+1)2^{k-1}. 
$$
On the other hand, it a routine to show that 
$
1+\sum_{k=1}^{n}k2^{n-k} = 2^{n+1}-(n+1) = 
1+\sum_{k=1}^{n}(n-k+1)2^{k-1},  
$ 
whence 
$$
|\ODP_n^-|=|\ODP_n^+| = 2^{n+1}-(n+1). 
$$
Finally, since $|E(\I_n)|=2^n$, we get 
$$
|\ODP_n| = |\ODP_n^-|+|\ODP_n^+| -|E(\I_n)| = 2(2^{n+1}-(n+1))-2^n 
=3\cdot 2^n-2(n+1). 
$$

Now, let 
$$
h=\left(\begin{array}{ccccc}
1&2&\cdots&n-1&n \\ 
n&n-1&\cdots&2&1
\end{array}\right) \in\DP_n.
$$
Notice that the identity (of $X_n$) and $h$ are the only permutations of $\DP_n$. 
On the other hand, given $\alpha\in \DP_n$, it is clear that $\alpha$ is an order-reversing transformation if and only if $h\alpha$ (and $\alpha h$) is an order-preserving transformation (see \cite{Fernandes&Gomes&Jesus:2004}). Hence, as $\alpha=h^2\alpha=h(h\alpha)$, it follows that the monoid $\DP_n$ is generated by $\ODP_n\cup \{h\}$. Moreover, it is also easy to deduce that 
$$
\DP_n=\ODP_n\cup h\cdot\ODP_n
\quad\text{and}\quad 
\ODP_n\cap h\cdot\ODP_n = \{\alpha\in\I_n\mid |\dom(\alpha)|\le1\}. 
$$ 
Thus 
$$
\begin{array}{rcl}
|\DP_n| & = & |\ODP_n|+ |h\cdot\ODP_n|- |\{\alpha\in\I_n\mid |\dom(\alpha)|\le1\}| \\
 & = & (3\cdot 2^n-2(n+1))+ (3\cdot 2^n-2(n+1)) -(n^2+1)\\
 & = & 3\cdot 2^{n+1}-(n+2)^2-1. 
\end{array}
$$

\medskip 

Next, we recall that Al-Kharousi et al.~proved in 
\cite{AlKharousi&Kehinde&Umar:2014s} that $\ODP_n$ is generated by 
$A=\{x_i \mid 1\leq i\leq n\}$ (as a monoid), where 
$$
x_i=
\left(\begin{array}{cccccc}
1&\cdots&n-i-1&n-i+1&\cdots&n \\ 
1&\cdots&n-i-1&n-i+1&\cdots&n
\end{array}\right), 
$$ 
for $1\leq i\leq n-2$, 
$$
x_{n-1}=\left(\begin{array}{ccccc}
1&2&\cdots&n-2&n-1 \\ 
2&3&\cdots&n-1&n
\end{array}\right)
\quad\text{and}\quad 
x_n=\left(\begin{array}{ccccc}
2&3&\cdots&n-1&n \\ 
1&2&\cdots&n-2&n-1
\end{array}\right). 
$$ 
Moreover, they also proved that $A$ is the unique minimal (for set inclusion) generating set of $\ODP_n$, from which follows immediately that $\ODP_n$ has rank equal to $n$ (as a monoid). 

Now, since $\ODP_n\cup\{h\}$ generates the monoid $\DP_n$, it follows that  $B=A\cup\{h\}$ is also a generating set of $\DP_n$. Moreover, it is easy to show that $x_{n-1}=hx_nh$ and $x_{n-i-1}=hx_ih$, for $1\leq i\leq n-2$. Therefore  
$C=\{h,x_n\}\cup\{x_i\mid 1\leq i\leq \lfloor \frac{n-1}{2} \rfloor\}$ is another generating set for $\DP_n$. Observe that $C$ as $\lfloor \frac{n+3}{2} \rfloor$ elements, which coincides with the rank of $\DP_n$ \cite{AlKharousi&Kehinde&Umar:2014s}.

\section{A presentation for $\ODP_n$} 

Consider the set $A=\{x_i \mid 1\leq i\leq n\}$ as an alphabet (with $n$ letters) and the set $R$ formed by the following $\frac{1}{2}n^2+\frac{1}{2}n+3$ monoid relations:

\begin{description}
\item[$(R_1)$] $x_i^2=x_i$,~ $1\leq i\leq n-2$;
\item[$(R_2)$] $x_ix_j=x_jx_i$,~ $1\leq i<j\leq n-2$;
\item[$(R_3)$] $x_{n-1}x_{i}=x_{i+1}x_{n-1}$,~ $1\leq i\leq n-3$;
\item[$(R_4)$] $x_nx_{i+1}=x_{i}x_n$,~ $1\leq i\leq n-3$;
\item[$(R_5)$] $x_{n-1}^2x_n=x_{1}x_{n-1}$;
\item[$(R_6)$] $x_nx_{n-1}^2=x_{n-1}x_{n-2}$; 
\item[$(R_7)$] $x_n^2x_{n-1}=x_{n-2}x_n$; 
\item[$(R_8)$] $x_{n-1}x_n^2=x_nx_{1}$;
\item[$(R_9)$] $x_{n-1}x_nx_{n-1}=x_{n-1}$;
\item[$(R_{10})$] $x_nx_{n-1}x_n=x_n$;
\item[$(R_{11})$] $x_n^n=x_{1}\cdots x_{n-2}x_{n-1}x_{n-2}$;
\item[$(R_{12})$] $x_n^{n+1}=x_n^n$. 
%$x_{1}\cdots x_{n-2}x_{n-1}x_{n-2}x_n=x_{1}\cdots x_{n-2}x_{n-1}x_{n-2}$.
\end{description}

This section is dedicated to prove that $\langle A\mid R\rangle$ is a presentation for the monoid $\ODP_n$, using the method given by Theorem \ref{ruskuc}. 

\medskip 

Observe that we can easily deduce from $R$ the following four relations, which are useful to simplify some calculations: 
\begin{equation}\label{morerel}
x_{1}x_{n-1}^2=x_{n-1}^2,\quad x_{n-1}^2x_{n-2}=x_{n-1}^2, \quad
x_{n-2}x_n^2=x_n^2 \quad\text{and}\quad x_n^2x_{1}=x_n^2. 
\end{equation}

\medskip 

First, it is a routine matter to prove that all relations over the alphabet $A$ from $R$ are satisfied by the generating set $A$ of $\ODP_n$ (with the natural correspondence between letters and generators). 

\medskip 

Next, we define our set of forms for $\langle A\mid R\rangle$. 

Let 
%\paragraph{Canonical words for rank $n-1$:}
$$
W_{n-1}=\left\{ x_1,\ldots,~x_n, ~x_{n-1}x_{n}, ~x_{n}x_{n-1}\right\}
$$
Notice that $|W_{n-1}|=n+2$. For $2\leq k\leq n-1$, let 

%\paragraph{Canonical words for rank $n-k$, with $2\leq k\leq n-1$:}
\begin{description}

\item $W_{n-k,1}=\left\{\left(x_{\ell_1}\cdots x_{\ell_{k-1}}\right)x_i \mid 1\le \ell_1<\cdots<\ell_{k-1}\le n-2, \quad \ell_{k-1}<i\leq n\right\}$, 

\item $W_{n-k,2}=\left\{\left(x_{\ell_1}\cdots x_{\ell_{k-1}}\right)x_{n-1}x_{n}, ~~\left(x_{\ell_1}\cdots x_{\ell_{k-1}}\right)x_{n}x_{n-1} \mid 1\le \ell_1<\cdots<\ell_{k-1}\le n-2\right\}$, 

\item $W_{n-2,3}=\left\{ x_{n-1}x_{n-2}x_n \right\}$ and, for $k\ge3$,  
$
W_{n-k,3}=\left\{\left(x_{\ell_1}\cdots x_{\ell_{k-2}}\right)x_{n-1}x_{n-2}x_n \mid 1\le \ell_1<\cdots<\ell_{k-2}\le n-2\right\}, 
$

\item $W_{n-k,4}=\left\{\left(x_{\ell_1}\cdots 
x_{\ell_{k-i}}\right)x_{n-1}^i, ~~x_{n-1}^k  
\mid i\le \ell_1<\cdots<\ell_{k-i}\le n-2,  
\quad 2\le i< k\right\}$, 

\item $W_{n-k,5}=\left\{\left(x_{\ell_1}\cdots 
x_{\ell_{k-i}}\right)x_{n}^i, ~~x_n^k \mid 1\le \ell_1<\cdots<\ell_{k-i}\le n-i-1,  \quad 2\le i< k\right\}$, 

\item $W_{n-k,6}=\left\{\left(x_{\ell_1}\cdots 
x_{\ell_{k-i}}\right)x_{n-1}^{i-1}x_{n-i}, ~~x_{n-1}^{k-1}x_{n-k} \mid 
i-1\le \ell_1<\cdots<\ell_{k-i}\le n-2, \quad 2\le i< k \right\}$, 

\item $W_{n-k,7}=\left\{\left(x_{\ell_1}\cdots 
x_{\ell_{k-i}}\right)x_{n}x_{1}x_n^{i-2}, ~~x_{n}x_{1}x_n^{k-2} \mid 
1\le \ell_1<\cdots<\ell_{k-i}\le n-i, \quad 2\le i< k\right\}$ 
\end{description}
and 
$$
W_{n-k}=W_{n-k,1}\cup W_{n-k,2}\cup W_{n-k,3}\cup 
W_{n-k,4}\cup W_{n-k,5}\cup W_{n-k,6}\cup W_{n-k,7}. 
$$ 
Notice that, for $2\leq k\leq n-1$, we have 
\begin{description}
\item $|W_{n-k,1}|=\binom{n-2}{k}+2\binom{n-2}{k-1}$, 

\item $|W_{n-k,2}|=2\binom{n-2}{k-1}$, 

\item $|W_{n-k,3}|=\binom{n-2}{k-2}$,  

\item $|W_{n-k,4}|=|W_{n-k,5}|=\sum_{i=2}^{k}\binom{n-i-1}{k-i}$, 

\item $|W_{n-k,6}|=|W_{n-k,7}|=\sum_{i=2}^{k}\binom{n-i}{k-i}$ 
\end{description}
and so $|W_{n-k}|= \binom{n-2}{k}+2\binom{n-2}{k-1} + 
2\binom{n-2}{k-1} +
\binom{n-2}{k-2} + 
2 \sum_{i=2}^{k}\binom{n-i-1}{k-i} + 
2 \sum_{i=2}^{k}\binom{n-i}{k-i} = 
\binom{n}{k} + 2 \sum_{i=1}^{k}\binom{n-i}{k-i}$.
Thus, for $1\leq k\leq n-1$, we have 
$$
|W_k| =  \binom{n}{n-k} + 2 \sum_{i=1}^{n-k}\binom{n-i}{n-k-i} =  \binom{n}{k} + 2 \sum_{i=0}^{n-k-1}\binom{k+i}{i} = \binom{n}{k} + 2 \sum_{i=k}^{n-1}\binom{i}{k}  
$$
and, by Gould (1.52) identity \cite[page 7]{Gould:1972}, i.e. 
$
\sum_{i=k}^{m}\binom{i}{k}=\binom{m+1}{k+1}, 
$
it follows 
$
|W_k|=\binom{n}{k}+2\binom{n}{k+1}. 
$
%\paragraph{Canonical words for rank $0$:}
Finally, let 
$$
W_{0}=\left\{ x_n^n\right\} 
$$
and  
$$
W=\{1\}\cup\bigcup_{k=0}^{n-1} W_k
$$
Notice that 
$$
\begin{array}{rcl}
|W| & = & 2 + \sum_{k=1}^{n-1}|W_k|\\
 & = & 2+ \sum_{k=1}^{n-1}(\binom{n}{k}+2\binom{n}{k+1}) \\ 
 & = & 2+ \sum_{k=1}^{n-1}\binom{n}{k}+2\sum_{k=1}^{n-1}\binom{n}{k+1} \\ 
 & = & 2+ \sum_{k=1}^{n-1}\binom{n}{k}+2\sum_{k=2}^{n}\binom{n}{k} \\ 
 & = & 2 + (2^n-1-1)+2(2^n-1-n)\\
 & = & 3\cdot2^n-2(n+1)\\
 & = & |\ODP_n|. 
\end{array}
$$

Observe that, for $0\le k\le n-1$, each word of $W_k$ represents a transformation of rank $k$ of $\ODP_n$. 

\begin{lemma}\label{lema}
$W$ constitutes a set of forms for $\langle A\mid R\rangle$. 
\end{lemma}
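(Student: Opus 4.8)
The plan is to invoke the sufficient criterion for a set of forms recalled in Section~\ref{presection}: since the empty word $1$ belongs to $W$ by construction, it suffices to prove that $W$ is closed, modulo the relations $R$, under right multiplication by each generator. Concretely, I would show that for every word $w\in W$ and every letter $x_j$ ($1\le j\le n$) there exists a word $w'\in W$ such that the relation $wx_j=w'$ is a consequence of $R$. Once this closure is established, the criterion stated just after Theorem~\ref{ruskuc} — empty word in $W$ together with closure under right multiplication by generators — immediately yields that $W$ satisfies condition~2 of Theorem~\ref{ruskuc}, i.e.\ that $W$ is a set of forms.

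To carry this out I would fix the letter $x_j$ and run through the pieces $W_{n-1}$, $W_{n-k,1},\ldots,W_{n-k,7}$ ($2\le k\le n-1$) and $W_0$ that make up $W$. It is natural to separate the generators into three types: the partial identities $x_j$ with $1\le j\le n-2$, the ascending shift $x_{n-1}$, and the descending shift $x_n$. Each word of $W$ has the canonical shape of a strictly increasing prefix $x_{\ell_1}\cdots x_{\ell_{k-1}}$ of partial identities followed by a short, tightly constrained suffix, so the basic moves are: use $(R_2)$ to reorder partial identities, use $(R_3)$ and $(R_4)$ to commute $x_{n-1}$ or $x_n$ past a partial identity (shifting its index), and use idempotency $(R_1)$ together with the derived relations $(\ref{morerel})$ to absorb a repeated index. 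In the benign cases these moves already rewrite $wx_j$ into the canonical form of an element of $W$, of rank one smaller (or equal), consistent with the remark that each word of $W_k$ represents a transformation of rank $k$.

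The main obstacle, as is typical for such presentations, is the case explosion: seven families, three generator types, and within each combination several subcases according to how the new index $j$ compares with the prefix and with the suffix. The genuinely delicate cases are those in which the product creates an interaction between $x_{n-1}$ and $x_n$, where one must invoke $(R_5)$--$(R_{10})$: for instance $x_{n-1}x_nx_{n-1}=x_{n-1}$ and $x_nx_{n-1}x_n=x_n$ collapse the suffixes $x_{n-1}x_n$ and $x_nx_{n-1}$, while $(R_5)$--$(R_8)$ convert a power of one shift abutting the other into a lower form, and the powers $x_n^i$ are governed by $(R_{11})$ and $(R_{12})$. The whole purpose of these relations is that a product which superficially appears to leave $W$ (by lengthening a power of a shift, or by creating a forbidden adjacency of the two shifts) is shown, through $R$, to re-enter one of the seven families; verifying that the resulting word meets the precise index constraints defining $W_{n-k,1},\ldots,W_{n-k,7}$ is the bookkeeping-heavy heart of the argument.

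Throughout, the rank count provides a reliable guide and consistency check: since every generator has rank $n-1$, right multiplication can never increase rank, so I would expect $wx_j$ to land in some $W_k$ with $k$ no larger than the rank of $w$, and this already pins down which family the canonical form must belong to before the rewriting is performed. Having verified closure under right multiplication for all words of $W$ and all generators, the criterion from Section~\ref{presection} gives that $W$ is a set of forms for $\langle A\mid R\rangle$, which is exactly the assertion of Lemma~\ref{lema}.
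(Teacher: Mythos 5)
Your overall strategy is exactly the paper's: since the empty word lies in $W$, the criterion stated after Theorem \ref{ruskuc} reduces the lemma to showing that for every $w\in W$ and every letter $x_j$ there is some $w'\in W$ with $wx_j=w'$ a consequence of $R$, and this is then checked by cases over the families $W_{n-1}$, $W_{n-k,1},\ldots,W_{n-k,7}$ ($2\le k\le n-1$) and $W_0$, with the generators split into partial identities, $x_{n-1}$ and $x_n$, and with $(R_1)$, $(R_2)$, $(R_3)$, $(R_4)$, the derived relations (\ref{morerel}), $(R_5)$--$(R_{10})$ and $(R_{11})$, $(R_{12})$ playing precisely the roles you assign them. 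So there is no disagreement about the route.

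The genuine gap is that you execute none of it. The entire mathematical content of this lemma is the case-by-case derivation, and your justification for why all cases must close --- that ``the whole purpose of these relations'' is to force every product back into one of the seven families --- presupposes the very statement being proved: whether $R$ is rich enough for this is exactly what is in question (if some case failed to close, the candidate presentation would simply be wrong and $R$ would need enlarging). Nothing in your text certifies, for example, that for $w\equiv x_{n-1}x_n$ and $j=1$ one can derive $x_{n-1}x_nx_1=x_{n-1}^2x_n^2=x_1x_{n-1}x_n\in W_{n-2,2}$ (apply $(R_8)$ from right to left, then $(R_5)$), nor that $x_n^nx_{n-1}=x_n^n$ is a consequence of $R$, which in the paper requires a six-step chain applying $(R_{12})$, $(R_{11})$, $(R_7)$, $(R_6)$, $(R_9)$ and $(R_{11})$ again. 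Be careful, too, with the role you give the rank-based ``consistency check'': it tracks equality of transformations in $\ODP_n$, whereas the lemma demands derivability in $A^*$ modulo $R$, and the distance between ``equal in the monoid'' and ``a consequence of $R$'' is precisely what this whole section must close; the rank count can guide the search for $w'$, but it can never certify a single case.
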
 
\begin{proof}
As observed after Theorem \ref{ruskuc} it suffices to show that 
for each letter $x\in A$ and
for each word $w\in W$, there exists a word $w'\in W$ such that
the relation $wx=w'$ is a consequence of $R$. 

In order to perform this aim, we consider separately the word $w\in W$ in each of the subsets considered above that defines $W$. Namely, in $W_{n-1}$, $W_{n-k,r}$, 
for $1\le r\le 7$ and $2\le k\le n-1$, and $W_0$. 

\begin{description}
\item[I.] Let $w\in W_{n-1}$ and $j\in\{1,\ldots,n\}$. We consider five cases. 
\begin{description}
\item[Case 1.] $w\equiv x_i$, with $i\in \{1,\ldots,n-2\}$. 
If $i<j$ then $wx_j\equiv x_ix_j\in W_{n-2,1}$. 
If $j<i$, by applying a relation $(R_2)$, 
we have $wx_j\equiv x_ix_j=x_jx_i \in W_{n-2,1}$. 
If $j=i$ then, by applying a relation $(R_1)$,  
we have $wx_j\equiv x_i^2=x_i \in W_{n-1}$. 

\item[Case 2.] $w\equiv x_{n-1}$. 
If $j\in \{1,\ldots,n-3\}$ then, 
by a relation $(R_3)$, 
we have $wx_j\equiv x_{n-1}x_j=x_{j+1}x_{n-1}\in W_{n-2,1}$. 
If $j=n-2$ then $wx_j\equiv x_{n-1}x_{n-2}\in W_{n-2,6}$.
If $j=n-1$ then $wx_j\equiv x_{n-1}^2 \in W_{n-2,4}$. 
If $j=n$ then $wx_j\equiv x_{n-1}x_n \in W_{n-1}$.

\item[Case 3.] $w\equiv x_n$. 
If $j=1$ then $wx_j\equiv x_{n}x_1 \in W_{n-2,7}$. 
If $j\in \{2,\ldots,n-2\}$ then, by a relation $(R_4)$, 
we have $wx_j\equiv x_{n}x_j=x_{j-1}x_{n}\in W_{n-2,1}$. 
If $j=n-1$ then $wx_j\equiv x_nx_{n-1} \in W_{n-1}$. 
If $j=n$ then $wx_j\equiv x_n^2 \in W_{n-2,5}$.

\item[Case 4.] $w\equiv x_{n-1}x_n$. 
If $j=1$ then, by relations $(R_8)$ and $(R_5)$,
we have $wx_j\equiv x_{n-1}x_nx_1=x_{n-1}^2x_n^2=x_1x_{n-1}x_n \in W_{n-2,2}$.
If $j\in \{2,\ldots,n-2\}$ then, by relations $(R_4)$ and $(R_3)$, 
we have $wx_j\equiv x_{n-1}x_nx_j=x_{n-1}x_{j-1}x_n=x_jx_{n-1}x_n\in W_{n-2,2}$. 
If $j=n-1$ then, by the relation $(R_9)$, 
we have $wx_j\equiv x_{n-1}x_nx_{n-1}=x_{n-1} \in W_{n-1}$. 
If $j=n$ then, by the relation $(R_8)$, 
we have $wx_j\equiv x_{n-1}x_n^2=x_nx_1 \in W_{n-2,7}$. 

\item[Case 5.] $w\equiv x_{n}x_{n-1}$. 
If $j\in \{1,\ldots,n-3\}$ then, by relations $(R_3)$ and $(R_4)$, 
we have $wx_j\equiv x_{n}x_{n-1}x_j=x_{n}x_{j+1}x_{n-1}=x_jx_{n}x_{n-1}\in W_{n-2,2}$.
If $j=n-2$ then, by relations $(R_6)$ and $(R_7)$,
we have $wx_j\equiv x_{n}x_{n-1}x_{n-2}=x_{n}^2x_{n-1}^2=x_{n-2}x_nx_{n-1} \in W_{n-2,2}$.
If $j=n-1$ then, by the relation $(R_6)$, 
we have $wx_j\equiv x_{n}x_{n-1}^2=x_{n-1}x_{n-2} \in W_{n-2,6}$.  
If $j=n$ then, by the relation $(R_{10})$, 
we have $wx_j\equiv x_nx_{n-1}x_n=x_n \in W_{n-1}$. 
\end{description}

\item[II.] Let $w\in W_{n-k,1}$, with $2\le k\le n-1$, 
and $j\in\{1,\ldots,n\}$. 
Then, for some  $1\le \ell_1<\cdots<\ell_{k-1}\le n-2$ and $\ell_{k-1}<i\leq n$, 
we have $w\equiv \left(x_{\ell_1}\cdots x_{\ell_{k-1}}\right)x_i$. 
We consider three  cases. 
\begin{description}
\item[Case 1.] $i\le n-2$. 
If $i<j$ then $wx_j\equiv \left(x_{\ell_1}\cdots x_{\ell_{k-1}}\right)x_ix_j\equiv \left(x_{\ell_1}\cdots x_{\ell_{k-1}}x_i\right)x_j\in W_{n-(k+1),1}$. 
If $j<i$ then $wx_j\equiv \left(x_{\ell_1}\cdots x_{\ell_{k-1}}\right)x_ix_j=\left(x_{\ell_1}\cdots x_{\ell_t}x_jx_{\ell_{t+1}}\cdots x_{\ell_{k-1}}\right)x_i$, with $\ell_t\le j\le\ell_{t+1}$ and $1\le t\le k-1$, by applying $k-t$ times relations $(R_2)$.
If either $\ell_t$ or $\ell_{t+1}$ is equal to $j$ then, by a relation $(R_1)$, 
we have $\left(x_{\ell_1}\cdots x_{\ell_t}x_jx_{\ell_{t+1}}\cdots x_{\ell_{k-1}}\right)x_i= w\in W_{n-k,1}$. 
Otherwise $\left(x_{\ell_1}\cdots x_{\ell_t}x_jx_{\ell_{t+1}}\cdots x_{\ell_{k-1}}\right)x_i\in W_{n-(k+1),1}$. 
If $j=i$ then $wx_j\equiv\left(x_{\ell_1}\cdots x_{\ell_{k-1}}\right)x_i^2=w \in W_{n-k,1}$, by applying a relation $(R_1)$. 

\item[Case 2.] $i=n-1$. 
If $j\in \{1,\ldots,n-3\}$ then 
$wx_j\equiv\left(x_{\ell_1}\cdots x_{\ell_{k-1}}\right)x_{n-1}x_j=
\left(x_{\ell_1}\cdots x_{\ell_{k-1}}\right)x_{j+1}x_{n-1}$, by applying a relation $(R_3)$. Now, as above, 
by applying enough times relations from $(R_2)$ and,
for $j+1\in \{\ell_1,\ldots,\ell_{k-1}\}$, also a relation $(R_1)$, we obtain 
$wx_j=w\in W_{n-k,1}$ or $wx_j=w'\in W_{n-(k+1),1}$. 
If $j=n-2$ then $wx_j\equiv \left(x_{\ell_1}\cdots x_{\ell_{k-1}}\right)x_{n-1}x_{n-2} \in W_{n-(k+1),6}$.
If $j=n-1$ and $\ell_1>1$ then 
$wx_j\equiv \left(x_{\ell_1}\cdots x_{\ell_{k-1}}\right)x_{n-1}^2\in W_{n-(k+1),4}$.
If $j=n-1$ and $\ell_1=1$ then 
$wx_j\equiv \left(x_{1}\cdots x_{\ell_{k-1}}\right)x_{n-1}^2=\left(x_{\ell_2}\cdots x_{\ell_{k-1}}\right)x_1x_{n-1}^2=\left(x_{\ell_2}\cdots x_{\ell_{k-1}}\right)x_{n-1}^2 \in W_{n-k,4}$, 
by first applying $k-1$ times relations from $(R_2)$ and then 
the first relation of (\ref{morerel}).
If $j=n$ then $wx_j\equiv \left(x_{\ell_1}\cdots x_{\ell_{k-1}}\right)x_{n-1}x_n\in  W_{n-k,2}$.

\item[Case 3.] $i=n$. 
If $j=1$ then $wx_j\equiv\left(x_{\ell_1}\cdots x_{\ell_{k-1}}\right)x_{n}x_1 \in W_{n-(k+1),7}$.
If $j\in \{2,\ldots,n-2\}$ then 
$wx_j\equiv\left(x_{\ell_1}\cdots x_{\ell_{k-1}}\right)x_{n}x_j=
\left(x_{\ell_1}\cdots x_{\ell_{k-1}}\right)x_{j-1}x_{n}$, by a relation $(R_4)$. 
Again, as above, 
by applying enough times relations from $(R_2)$ and, for $j-1\in \{\ell_1,\ldots,\ell_{k-1}\}$, also a relation $(R_1)$, we obtain 
$wx_j=w\in W_{n-k,1}$ or $wx_j=w'\in W_{n-(k+1),1}$. 
If $j=n-1$ then 
$wx_j\equiv \left(x_{\ell_1}\cdots x_{\ell_{k-1}}\right)x_nx_{n-1} \in  W_{n-k,2}$.
If $j=n$ and $\ell_{k-1}<n-2$ then 
$wx_j\equiv \left(x_{\ell_1}\cdots x_{\ell_{k-1}}\right)x_{n}^2
\in W_{n-(k+1),5}$. 
If $j=n$ and $\ell_{k-1}=n-2$ then 
$wx_j\equiv\left(x_{\ell_1}\cdots x_{\ell_{k-2}}x_{n-2}\right)x_{n}^2\equiv \left(x_{\ell_1}\cdots x_{\ell_{k-2}}\right) x_{n-2}x_{n}^2=\left(x_{\ell_1}\cdots x_{\ell_{k-2}}\right)x_{n}^2\in W_{n-k,5}$, by applying the third relation of (\ref{morerel}). 
\end{description}

\item[III.] For $w\in W_{n-k,r}$, with $2\le k\le n-1$ and $2\le r\le 7$, 
and $j\in\{1,\ldots,n\}$, similar calculations to the previous cases, mostly routine, assure us the existence of $w'\in W$ such that the relation $wx_j=w'$ is a consequence of $R$. In fact, we may find $w'$ belonging to:  
\begin{enumerate}
\item $W_{n-k,2}\cup W_{n-(k+1),2}\cup W_{n-k,1}\cup W_{n-(k+1),6}\cup W_{n-(k+1),7}$,~ if $r=2$; 

\item $W_{n-3,3}\cup W_{n-2,6}\cup W_{n-2,7}$,~ if $r=3$ and $k=2$; 

\item $W_{n-k,3}\cup W_{n-(k+1),3}\cup W_{n-k,6}\cup W_{n-k,7}$,~ if $r=3$ and $k\ge3$; 

\item $W_{n-k,4}\cup W_{n-(k+1),4}\cup W_{n-k,1}\cup W_{n-(k+1),6}$,~ if $r=4$; 

\item $W_{n-k,5}\cup W_{n-(k+1),5}\cup W_{n-k,1}\cup W_{n-k,4}\cup W_{n-(k+1),7}$,~ if $r=5$; 

\item $W_{n-k,6}\cup W_{n-(k+1),6}\cup W_{n-k,3}\cup W_0$,~ if $r=6$; 

\item $W_{n-k,7}\cup W_{n-(k+1),7}\cup W_{n-k,3}$,~ if $r=7$.  
\end{enumerate}

\item[IV.] Finally, we show that $x_n^nx_j=x_n^n$ is a consequence of $R$, for $j\in \{1,\ldots,n\}$. We consider four cases. 
\begin{description}
\item[Case 1.] $j\in \{1,\ldots,n-3\}$. Then 
$$
\begin{array}{rcl}
x_n^nx_j &=& x_1\cdots x_{n-2}x_{n-1}x_{n-2}x_j\\ 
&=&x_1\cdots x_{n-2}x_{n-1}x_jx_{n-2}\\
&=& x_1\cdots x_{n-2}x_{j+1}x_{n-1}x_{n-2}\\ 
&=& x_1\cdots x_{j+1}^2 \cdots x_{n-2}x_{n-1}x_{n-2}\\
&=& x_1\cdots x_{j+1} \cdots x_{n-2}x_{n-1}x_{n-2}\\
&=&x_n^n~, 
\end{array}
$$
by applying relations $(R_{11})$, $(R_2)$, $(R_3)$, $(R_2)$, $(R_1)$ and $(R_{11})$ in an orderly manner. 

\item[Case 2.]  $j=n-2$. By applying relations $(R_{11})$ and $(R_1)$, we have 
$$
x_n^nx_j=x_1\cdots x_{n-2}x_{n-1}x_{n-2}^2=x_1\cdots x_{n-2}x_{n-1}x_{n-2}=x_n^n. 
$$  

\item[Case 3.] $j=n-1$. We obtain 
$$
\begin{array}{rcl}
x_n^nx_j & = & x_n^nx_nx_{n-1}\\ 
%% &=& x_1\cdots x_{n-2}x_{n-1}x_{n-2}x_{n-1}\\
&=& x_1\cdots x_{n-2}x_{n-1}x_{n-2}x_nx_{n-1}\\
&=& x_1\cdots x_{n-2}x_{n-1}x_n^2x_{n-1}^2\\
&=& x_1\cdots x_{n-2}x_{n-1}x_nx_{n-1}x_{n-2}\\
&=& x_1\cdots x_{n-2}x_{n-1}x_{n-2}\\
&=&x_n^n~, 
\end{array}
$$
by applying relations $(R_{12})$, $(R_{11})$, $(R_7)$, $(R_6)$, $(R_9)$ and $(R_{11})$ in an orderly manner. 

\item[Case 4.]  $j=n$. This is exactly relation $(R_{12})$. 
%By applying relations $(R_{11})$ and $(R_{12})$, we get 
%$$
%x_n^nx_j=x_1\cdots x_{n-2}x_{n-1}x_{n-2}x_n=x_1\cdots x_{n-2}x_{n-1}x_{n-2}=x_n^n. 
%$$
\end{description}
\end{description}
This completes the proof of the lemma. 
\end{proof}

At this stage, we proved all the conditions of Theorem \ref{ruskuc}. Therefore, we have: 

\begin{theorem}
The monoid $\ODP_n$ is defined by the presentation $\langle A\mid R \rangle$, on $n$ generators and $\frac{1}{2}n^2+\frac{1}{2}n+3$ relations.
\end{theorem}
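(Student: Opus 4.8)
The plan is to apply the Guess and Prove method (Theorem \ref{ruskuc}) directly, with $M=\ODP_n$, generating set $X=A$, relation set $R$, and the set of forms $W$ constructed above. At this point every ingredient is already in hand, so the proof reduces to confirming that the three hypotheses of Theorem \ref{ruskuc} are satisfied and then reading off the numerical data.

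First I would check condition (1), that the generators $x_1,\ldots,x_n$ of $\ODP_n$ satisfy each relation of $R$; this is the routine verification signalled earlier, where for every family $(R_1)$--$(R_{12})$ one evaluates the two partial isometries described by the two sides and confirms they agree. Next, condition (2), that $W$ is a set of forms for $\langle A\mid R\rangle$, is exactly the content of Lemma \ref{lema}: it guarantees that every word over $A$ reduces, modulo $R$, to one of the normal forms collected in $W$, so that $\langle A\mid R\rangle$ presents a monoid of at most $|W|$ elements. Finally, condition (3), namely $|W|\le|\ODP_n|$, is supplied by the cardinality computation preceding Lemma \ref{lema}, which (via the Gould identity applied to $|W_k|=\binom{n}{k}+2\binom{n}{k+1}$) yields $|W|=3\cdot2^n-2(n+1)=|\ODP_n|$; in fact equality holds, exactly as the remark after Theorem \ref{ruskuc} predicts.

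With the three conditions in place, Theorem \ref{ruskuc} delivers that $\ODP_n$ is defined by $\langle A\mid R\rangle$. The alphabet $A$ has $n$ letters, and tallying the relations gives $(n-2)+\binom{n-2}{2}+2(n-3)+8=\frac{1}{2}n^2+\frac{1}{2}n+3$, matching the stated count. The only genuine obstacle in the whole argument is Lemma \ref{lema}: establishing that the intricate family $W=\{1\}\cup\bigcup_k W_k$ is closed, up to consequences of $R$, under right multiplication by each generator $x_j$, uniformly across all the subfamilies $W_{n-k,r}$. Once that closure is secured, the theorem itself is pure assembly, so I would keep the proof of the theorem short and let Lemma \ref{lema} carry the combinatorial weight.
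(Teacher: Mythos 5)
Your proposal is correct and follows exactly the paper's own route: verify the three hypotheses of Theorem \ref{ruskuc} using the routine check of the relations, Lemma \ref{lema} for the set of forms, and the cardinality computation $|W|=3\cdot2^n-2(n+1)=|\ODP_n|$, then conclude. The relation count $(n-2)+\binom{n-2}{2}+2(n-3)+8=\frac{1}{2}n^2+\frac{1}{2}n+3$ is also right, so there is nothing to add.
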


\section{Presentations for $\DP_n$}

In this section we exhibit two presentations for $\DP_n$. We start by construct a presentation associated to the set of generators $B$ and then deduce a new one associated to the set of generators $C$. 
Recall that $B=A\cup \{h\}=\{x_1,\ldots,x_n,h\}$ and 
$C=\{h,x_n\}\cup\{x_i\mid 1\leq i\leq \lfloor \frac{n-1}{2} \rfloor\}$. 

Consider the set $B$ as an alphabet (with $n+1$ letters) and the set $\overline{R}$ formed by all relations from $R$ (defined in the previous section) together with the following monoid relations: 
\begin{enumerate}
\item[$(\NR_0)$] $h^2=1$; 
\item[$(\NR_1)$] $hx_{n-1}=x_{n}h$,~ $hx_n=x_{n-1}h$ ~and~ 
$hx_i=x_{n-i-1}h$,~ $1\leq i\leq n-2$; 
\item[$(\NR_2)$] $x_n^{n-1}h=x_nx_{n-1}x_{n-2}\cdots x_2x_1$. 
\end{enumerate}

Notice that $\overline{R}$ is a set of $\frac{1}{2}n^2+\frac{3}{2}n+5$ monoid relations over the alphabet $B$. 

\medskip 

It is a routine matter to prove that all relations from $\overline{R}$ are satisfied by the generating set $B$ of $\DP_n$ (with the natural correspondence between letters and generators). 

\medskip 

Now, let $\alpha_{i,j}=\binom{i}{j}\in\ODP_n$, for $1\le i,j\le n$. 
Then $x_n^{n-1}=\alpha_{n,1}$ and 
$
\alpha_{i,j}=\alpha_{i,n}\alpha_{n,1}\alpha_{1,j}=\alpha_{i,n}x_n^{n-1}\alpha_{1,j}~,
$
for $1\leq i,j\leq n$.
Let $u_i$ and $v_i$ be the words of $W$ (defined in the previous section or even any other set of representatives of all elements of $\ODP_n$ over the alphabet $A$) that represents the elements $\alpha_{i,n}$ and $\alpha_{1,i}$ of $\ODP_n$, respectively, 
for $1\leq i\leq n$. 
Hence the word $u_ix_n^{n-1}v_j$ over the alphabet $A$ also represents the transformation $\alpha_{i,j}\in\ODP_n$, for $1\le i,j\le n$. 

Let $W_{\alpha}=\{u_ix_n^{n-1}v_j \mid 1\leq i,j\leq n\}\cup W_0$ and $W_{\beta}=W\setminus W_{\alpha}$. Notice that $W_{\alpha}$ is 
a set of representatives for the transformations of $\ODP_n$ of rank less than or equal to one and $x_n^{n-1}$ is a factor of each word of $W_{\alpha}$.
Let $\overline{W}=W\cup \{wh\mid w\in W_{\beta}\}$. 
Since also  
$
|\overline{W}|=|W|+|W_{\beta}|=2|W|-|W_{\alpha}|=2|\ODP_n|-(n^2+1)=|\DP_n| 
$, 
by Theorem \ref{overpresentation}, we may conclude that the monoid $\DP_n$ is defined by the presentation 
$\langle B\mid \overline{R} \rangle$. 

On the other hand, it is easy to show that relations $(\NR_1)$ are a consequence of $(\NR_0)$ and 
\begin{enumerate}
\item[$(\overline{\NR}_1)$] 
$hx_n=x_{n-1}h\quad\text{and}\quad hx_i=x_{n-i-1}h$,~ 
$1\le i\le\lfloor\frac{n-1}{2}\rfloor$; 
\end{enumerate}
Moreover, within the context of relations $(\NR_0)$ and $(\NR_1)$: 
\begin{enumerate}

\item Relations $(R_1)$ are equivalent to relations 
\begin{enumerate}
\item[$(\overline{R}_1)$] $x_i^2=x_i$,~ $1\leq i\leq\lfloor\frac{n-1}{2}\rfloor$;
\end{enumerate}

\item Relations $(R_3)$ are equivalent to relations $(R_4)$;  

\item Relations $(R_5)$ and $(R_7)$ are (both) equivalent to relation 
\begin{enumerate}
\item[$(\overline{R}_7)$] $x_n^2hx_nh=hx_1hx_n$; 
\end{enumerate}

\item Relations $(R_6)$ and $(R_8)$ are (both) equivalent to relation 
\begin{enumerate}
\item[$(\overline{R}_8)$] $hx_nhx_n^2=x_nx_1$; 
\end{enumerate}

\item Relations $(R_9)$ and $(R_{10})$ are (both) equivalent to relation 
\begin{enumerate}
\item[$(\overline{R}_{10})$] $x_nhx_nhx_n=x_n$. 
\end{enumerate}
\end{enumerate}

Therefore, if we denote by $U$ the set of monoid relations over $B$ 
formed by all relations from 
$$
(\overline{R}_1),~ (R_2),~ (R_4),~ (\overline{R}_7),~ 
(\overline{R}_8),~ (\overline{R}_{10}),~ (R_{11}),~ 
(R_{12}),~ (\NR_0),~ (\overline{\NR}_1)~ \text{and}~(\NR_2), 
$$
we have: 

\begin{theorem}
The monoid $\DP_n$ is defined by the presentation $\langle B\mid U \rangle$, on $n+1$ generators and $\frac{1}{2}(n^2-n+13-(-1)^n)$ relations.
\end{theorem}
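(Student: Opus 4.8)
The plan is to start from the presentation $\langle B\mid\overline{R}\rangle$, which the argument preceding the statement has already shown (via Theorem~\ref{overpresentation}) to define $\DP_n$, and to convert it into $\langle B\mid U\rangle$ by a finite sequence of elementary Tietze transformations. By the Tietze theorem recalled in Section~\ref{presection}, it then follows that $\langle B\mid U\rangle$ defines the same monoid $\DP_n$. Thus the whole proof reduces to justifying the relation-level equivalences listed immediately before the statement and to counting the relations of $U$.

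First I would verify each of those equivalences as a genuine consequence relation over the alphabet $B$. For $(\NR_1)$ versus $(\overline{\NR}_1)$: since $(\overline{\NR}_1)$ is literally a sub-list of $(\NR_1)$, one direction is trivial, and for the converse I would recover the missing instances of $(\NR_1)$ by conjugating by $h$ and using $h^2=1$ from $(\NR_0)$ --- e.g. $hx_{n-1}=x_nh$ is obtained from $hx_n=x_{n-1}h$ on multiplying by $h$ on both sides and cancelling via $(\NR_0)$, and each relation $hx_i=x_{n-i-1}h$ with $\lfloor\frac{n-1}{2}\rfloor<i\le n-2$ is the $h$-conjugate of the corresponding relation of index $n-i-1\le\lfloor\frac{n-1}{2}\rfloor$. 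The remaining equivalences are established in the presence of $(\NR_0)$ and $(\NR_1)$: the conjugation rules in $(\NR_1)$ send $x_i\mapsto x_{n-i-1}$, $x_{n-1}\mapsto x_n$ and $x_n\mapsto x_{n-1}$, which lets me pass $h$ across any generator. Using this, each symmetric pair of $R$-relations collapses to a single $h$-relation: for instance, substituting $(\NR_1)$ into $(\overline{R}_7)$ rewrites it as one member of the pair $(R_5),(R_7)$, and conjugating that member by $h$ yields the other; the same pattern handles $(\overline{R}_8)$ against $(R_6),(R_8)$ and $(\overline{R}_{10})$ against $(R_9),(R_{10})$. The reductions $(R_1)\leftrightarrow(\overline{R}_1)$ and $(R_3)\leftrightarrow(R_4)$ are obtained the same way, by conjugating the relations of index exceeding $\lfloor\frac{n-1}{2}\rfloor$ (respectively, the relations $(R_3)$) into the retained ones.

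With these equivalences in hand I would perform the Tietze surgery: starting from $\overline{R}$, add every relation of $U$ by repeated use of (T1) (each being a consequence of $\overline{R}$), and then remove by repeated use of (T2) every relation of $\overline{R}$ not retained in $U$, the deletion of each such relation being licensed by the equivalences just proved (the removed relation being a consequence of the relations that remain). This turns $\langle B\mid\overline{R}\rangle$ into $\langle B\mid U\rangle$ without changing the defined monoid. It remains to count: $U$ contributes $\lfloor\frac{n-1}{2}\rfloor$ relations from $(\overline{R}_1)$, $\binom{n-2}{2}$ from $(R_2)$, $n-3$ from $(R_4)$, and $1+\lfloor\frac{n-1}{2}\rfloor$ from $(\overline{\NR}_1)$, together with one relation from each of $(\overline{R}_7)$, $(\overline{R}_8)$, $(\overline{R}_{10})$, $(R_{11})$, $(R_{12})$, $(\NR_0)$ and $(\NR_2)$. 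Using $2\lfloor\frac{n-1}{2}\rfloor=\frac{1}{2}(2n-3-(-1)^n)$ and $\binom{n-2}{2}=\frac{1}{2}(n^2-5n+6)$, the sum simplifies to $\frac{1}{2}(n^2-n+13-(-1)^n)$, as claimed.

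The hard part will be the case-by-case verification of the pairwise equivalences, and especially confirming that a single $h$-relation such as $(\overline{R}_7)$ really recovers \emph{both} members of the symmetric pair $(R_5),(R_7)$: one member comes from substituting $(\NR_1)$ to eliminate $h$, while the other must be obtained by conjugating the first by $h$ and invoking $(\NR_0)$. I would take care throughout that each Tietze step uses only $(\NR_0)$, $(\NR_1)$ and the relation currently being tested, so that the additions and deletions are genuinely valid and the final presentation is exactly $\langle B\mid U\rangle$.
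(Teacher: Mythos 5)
Your proposal is correct and takes essentially the same route as the paper: both pass from the already-established presentation $\langle B\mid\overline{R}\rangle$ to $\langle B\mid U\rangle$ via the same list of relation equivalences (which the paper leaves as ``easy to show'' and which you verify by $h$-conjugation using $(\NR_0)$ and $(\NR_1)$), the paper invoking Tietze transformations implicitly where you spell them out. Your count of the relations in $U$ also agrees with the stated total $\frac{1}{2}(n^2-n+13-(-1)^n)$.
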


We finish this section, and the paper, by \textit{removing} superfluous generators from the above presentation of $\DP_n$, i.e. since $x_{n-1}=hx_nh$ and 
$x_{n-i-1}=hx_ih$, for $1\le i\le \lfloor \frac{n-1}{2} \rfloor$, we remove all  letters/generators $x_i$, for $\frac{n-1}{2} < i\le n-1$, replace all of their occurrences by the previous expressions in all relations of $U$ and remove all  trivial relations and all relations clearly deductible from others obtained in the process.   

First, we turn our attention to relations $(R_2)$. Since $1\le i<j\le n-2$ may be decompose in 
$$
1\le i<j\le \lfloor \frac{n-1}{2}\rfloor
\quad\text{or}\quad 
1\le i\le \lfloor \frac{n-1}{2} \rfloor <j\le n-2
\quad\text{or}\quad
\lfloor \frac{n-1}{2} \rfloor<i<j\le n-2~, 
$$ 
and, for the case $\lfloor \frac{n-1}{2} \rfloor<i<j\le n-2$, we have $x_ix_j=x_jx_i$ if and only if $(hx_{n-i-1}h)(hx_{n-j-1}h)=(hx_{n-j-1}h)(hx_{n-i-1}h)$ if and only if 
$x_{n-i-1}x_{n-j-1}=x_{n-j-1}x_{n-i-1}$, and $1\le n-j-1<n-i-1\le  \lfloor \frac{n-1}{2} \rfloor$, then we obtain the following relations from $(R_2)$: 
\begin{enumerate}
\item[$(\overline{R}_2)$] $x_ix_j=x_jx_i$,~ $1\leq i<j\leq \lfloor \frac{n-1}{2} \rfloor$,~ and 
~$x_ihx_{n-j-1}h=hx_{n-j-1}hx_i$,~ $1\leq i\leq \lfloor\frac{n-1}{2} \rfloor <j\leq n-2$. 
\end{enumerate}

Now, we consider the relations $(R_4)$: $x_nx_{i+1}=x_ix_n$, with $1\le i\le n-3$. For $i=\lfloor\frac{n-1}{2}\rfloor$, we get the relation 
$x_nhx_{\lceil\frac{n-3}{2}\rceil}h=x_{\lfloor\frac{n-1}{2}\rfloor}x_n$ and, for 
$\lfloor\frac{n-1}{2}\rfloor< i\le n-3$, we have 
$x_nhx_{n-i-2}h=hx_{n-i-1}hx_n$, i.e. $x_nhx_jh=hx_{j+1}hx_n$, 
with $1\le j\le \lceil\frac{n-5}{2}\rceil$. Hence, we obtain the following relations from $(R_4)$:
\begin{enumerate}
\item[$(\overline{R}_4)$] $x_nx_{i+1}=x_ix_n$,~ 
$1\le i\le \lfloor\frac{n-3}{2}\rfloor$,~  
$x_nhx_{\lceil\frac{n-3}{2}\rceil}h=x_{\lfloor\frac{n-1}{2}\rfloor}x_n$ ~and~ 
$x_nhx_ih=hx_{i+1}hx_n$,~ $1\le i\le \lceil\frac{n-5}{2}\rceil$. 
\end{enumerate}

Next, from $(R_{11})$ and $(\NR_2)$, we obtain 
\begin{enumerate}
\item[$(\overline{R}_{11})$] 
$x_n^n=x_{1}\cdots x_{\lfloor\frac{n-1}{2}\rfloor}hx_{\lceil\frac{n-3}{2} \rceil}\cdots x_1x_nx_1h$, 

%%\item[$(\overline{R}_{12})$] $x_{1}\cdots x_{\lfloor \frac{n-1}{2} \rfloor}hx_{\lceil\frac{n-3}{2} \rceil}\cdots x_1x_nx_1hx_n=x_{1}\cdots x_{\lfloor \frac{n-1}{2} \rfloor}hx_{\lceil\frac{n-3}{2} \rceil}\cdots x_1x_nx_1h$,

\item[$(\overline{\NR}_2)$] $x_n^{n-1}h=x_nhx_nx_1\cdots 
x_{\lceil\frac{n-3}{2} \rceil}hx_{\lfloor \frac{n-1}{2} \rfloor}\cdots x_1$, 
\end{enumerate}
respectively. 

Finally, it is easy to show that, from $(\overline{\NR}_1)$, only for $n$ odd we obtain non trivial relations and, in fact, a unique one, namely 
\begin{enumerate}
\item[$(\widehat{\NR}_1)$]  $hx_{\frac{n-1}{2}}=x_{\frac{n-1}{2}}h$,~ if $n$ is odd. 
\end{enumerate}

Let us assume that $(\widehat{\NR}_1)$ denotes the empty set for $n$ even. 

Denote by $V$ the set of monoid relations over $B$ 
formed by all relations from 
$$
(\overline{R}_1),~ (\overline{R}_2),~ (\overline{R}_4),~ (\overline{R}_7),~ 
(\overline{R}_8),~ (\overline{R}_{10}),~ (\overline{R}_{11}),~ 
(R_{12}),~ (\NR_0),~ (\widehat{\NR}_1)~ \text{and}~(\overline{\NR}_2). 
$$
Thus, we have: 

\begin{theorem}
The monoid $\DP_n$ is defined by the presentation $\langle C\mid V \rangle$, on $\lfloor \frac{n+3}{2} \rfloor$ generators and $\frac{3}{8}n^2+\frac{45}{8}$  relations, for $n$ odd, and $\frac{3}{8}n^2-\frac{1}{4}n+5$ relations, for $n$ even.
\end{theorem}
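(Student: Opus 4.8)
The plan is to derive the presentation $\langle C\mid V\rangle$ from the presentation $\langle B\mid U\rangle$ of the preceding theorem by a finite sequence of elementary Tietze transformations, and then to tally the surviving relations, splitting the count according to the parity of $n$. Since every Tietze move preserves the defined monoid, it suffices to verify that, starting from $\langle B\mid U\rangle$, the manipulations described in the discussion before the statement really are legitimate (T1)--(T4) steps and that they terminate precisely in $\langle C\mid V\rangle$.

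First I would record, as consequences of $(\NR_0)$ and $(\overline{\NR}_1)$, the relations $x_{n-1}=hx_nh$ and $x_{n-i-1}=hx_ih$ for $1\le i\le\lfloor\frac{n-1}{2}\rfloor$, adjoining them by (T1). As $i$ runs over this range the index $n-i-1$ decreases from $n-2$, and together with $x_{n-1}=hx_nh$ this supplies, for every redundant generator $x_j$ with $\lfloor\frac{n-1}{2}\rfloor<j\le n-1$, an expression as a word $hx_\bullet h$ all of whose letters lie in $C$. Each such right-hand side thus belongs to $(B\setminus\{x_j\})^*$, so (T4) applies and can be iterated to delete all these generators; because no deleted generator occurs on the right-hand side of another of these defining relations, the order of elimination is immaterial. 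The one delicate point is the self-referential case $n-i-1=i$: it arises exactly when $n$ is odd and $i=\frac{n-1}{2}$ (then $n-i-1$ lands back in the retained range), in which situation the relation cannot delete a generator and instead survives as $(\widehat{\NR}_1)$, whereas for $n$ even no such case occurs and $(\widehat{\NR}_1)$ is empty.

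Next I would substitute the words $hx_\bullet h$ for the deleted letters throughout $U$, exactly as carried out before the statement. The relations $(\overline{R}_1)$, $(\overline{R}_7)$, $(\overline{R}_8)$, $(\overline{R}_{10})$, $(R_{12})$ and $(\NR_0)$ involve only letters of $C$ and pass through unchanged; splitting the index range of $(R_2)$ at $\lfloor\frac{n-1}{2}\rfloor$ and substituting turns it into $(\overline{R}_2)$, the block $\lfloor\frac{n-1}{2}\rfloor<i<j\le n-2$ becoming a consequence of the first block and so being discarded by (T2); the same treatment of $(R_4)$ gives $(\overline{R}_4)$; and $(R_{11})$, $(\NR_2)$ become $(\overline{R}_{11})$, $(\overline{\NR}_2)$ respectively. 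After substitution the remaining relations $(\overline{\NR}_1)$ collapse to trivialities and are removed by (T2), save for $(\widehat{\NR}_1)$. Collecting what is left yields exactly $V$, so $\langle C\mid V\rangle$ defines $\DP_n$.

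It then remains to count $V$. Summing the $\lfloor\frac{n-1}{2}\rfloor$ relations of $(\overline{R}_1)$, the $\binom{\lfloor\frac{n-1}{2}\rfloor}{2}+\lfloor\frac{n-1}{2}\rfloor\bigl(n-2-\lfloor\frac{n-1}{2}\rfloor\bigr)$ relations of $(\overline{R}_2)$, the $\lfloor\frac{n-3}{2}\rfloor+1+\lceil\frac{n-5}{2}\rceil$ relations of $(\overline{R}_4)$, the single relations $(\overline{R}_7)$, $(\overline{R}_8)$, $(\overline{R}_{10})$, $(\overline{R}_{11})$, $(R_{12})$, $(\NR_0)$, $(\overline{\NR}_2)$, and $(\widehat{\NR}_1)$ counted only when $n$ is odd, and finally setting $n=2m+1$ or $n=2m$ and simplifying, I expect the totals $\frac{3}{8}n^2+\frac{45}{8}$ and $\frac{3}{8}n^2-\frac14 n+5$ to fall out. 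The main obstacle is clerical rather than conceptual: one must substitute into $(R_2)$, $(R_4)$, $(R_{11})$ and $(\NR_2)$ carefully enough that the correct floor/ceiling ranges emerge, confirm that exactly the claimed blocks become redundant, and assemble the count separately for the two parities, since both the size of the cross-range block of $(\overline{R}_2)$ and the presence of $(\widehat{\NR}_1)$ depend on whether $n$ is even or odd.
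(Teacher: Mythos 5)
Your proposal is correct and follows essentially the same route as the paper: the paper's argument is precisely the elimination of the superfluous generators $x_i$, $\frac{n-1}{2}<i\le n-1$, via $x_{n-1}=hx_nh$ and $x_{n-i-1}=hx_ih$ (Tietze moves (T1)--(T4)), the splitting of $(R_2)$ and $(R_4)$ into the retained and substituted blocks, the survival of $(\widehat{\NR}_1)$ only for $n$ odd, and the resulting relation count. Your tally of $V$ (including the cross-range block of $(\overline{R}_2)$ and the three pieces of $(\overline{R}_4)$) does simplify to $\frac{3}{8}n^2+\frac{45}{8}$ for $n$ odd and $\frac{3}{8}n^2-\frac{1}{4}n+5$ for $n$ even, matching the statement.
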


%%\section*{Acknowledgments}
%%$\Theta\eta\epsilon$ $\alpha u\theta\eta o\rho$ $\omega o u\lambda\delta$ $\lambda\iota k\epsilon$ $\theta o$ $\theta\eta\alpha\nu k$  $\Pi\rho o\phi.$  
%%$\Phi\iota\lambda\iota\pi\epsilon$ $O\lambda\iota\beta\epsilon\iota\rho\alpha$ $\phi o\rho$ $\theta\eta\epsilon$ $\xi o\rho\rho\epsilon\xi\theta$ $\lambda\epsilon\nu\gamma\eta\theta$ $\alpha\sigma$ $\omega\epsilon\lambda\lambda$ $\alpha\sigma$ $o\nu$ $\theta\eta\epsilon$ $\alpha\mu ou\theta$ $o\phi$ $\gamma\rho\epsilon\epsilon k$ $\lambda\epsilon\theta\theta\epsilon\rho\sigma$ $u\sigma\epsilon\delta$. 

\bigskip 

\lastpage 

\end{document}